\newtheorem{theorem}{Theorem}
\newtheorem{lemma}[theorem]{Lemma}
\newtheorem{corollary}[theorem]{Corollary}
\newtheorem{definition}[theorem]{Definition}
\begin{document}

\title{Bipartite Biregular Cages and Block Designs\footnote{
Research supported by CONACyT-M{\' e}xico under Project 282280 and PAPIIT-M{\' e}xico under Projects IN107218, IN106318, Slovenian Research Agency (research program P1-0285 and research project J1-1695) and VEGA 1/0596/17, VEGA 1/0719/18, APVV-15-0220, by the Slovenian Research Agency (research projects N1-0038, N1-0062, J1-9108), and by NSFC 11371307.}}
\author{Gabriela Araujo-Pardo \\
Instituto de Matematicas, \\
Universidad Nacional Aut\'onoma de M\'exico, M\'exico.
\\
\\
Alejandra Ramos-Rivera\\
University of Primorska\\
Koper, Slovenia.
\\
\\
Robert Jajcay \\
Comenius University, Bratislava, Slovakia, and \\
University of Primorska, Koper Slovenia.}

\maketitle

\begin{abstract}
A {\em bipartite biregular} $(n,m;g)$-graph $G$ is a bipartite graph
of even girth $g$ having the degree set $\{n,m\}$ and satisfying the additional property that 
the vertices in the same partite set have the same degree. 
An  $(n,m;g)$-{\em bipartite biregular cage} is a bipartite biregular $(n,m;g)$-graph 
of minimum order. In their 2019 paper, Filipovski, Ramos-Rivera and Jajcay present lower bounds 
on the orders of bipartite biregular $(n,m;g)$-graphs, and call the graphs that attain these bounds  {\em bipartite biregular Moore cages}. 

In parallel with the well-known classical results relating the existence of $k$-regular Moore graphs of even girths $g = 6,8 $ and $12$ to the existence of projective planes, generalized quadrangles, and generalized hexagons,
we prove that the existence of $S(2,k,v)$-Steiner systems  yields the existence of  bipartite biregular 
$(k,\frac{v-1}{k-1};6)$-Moore cages. Moreover, in the special case of Steiner triple systems (i.e., in the case $k=3$),
we completely solve the problem of the existence of $(3,m;6)$-bipartite biregular cages for all integers $m\geq 4$. 

Considering girths higher than $6$ and prime powers $s$, we relate the existence of
generalized polygons (quadrangles, hexagons and octagons) with the existence of 
$(n+1,n^2+1;8)$, $(n+1,n^3+1;12)$, and $(n+1,n^2+1;16)$-bipartite biregular Moore cages, respectively. Using this connection, we derive improved upper bounds for the orders of bipartite biregular cages of girths $8$, $12$ and $14$.

{\it{Keywords:}} Bipartite biregular graphs, cage problem, finite geometries, block designs, Steiner systems.
\end{abstract}




\section{Introduction}
The well-known {\em Cage Problem} is the problem of finding $k$-regular graphs of girth $g$
(which we shall call $(k,g)${\em -graphs})
and smallest possible orders for all pairs of parameters $ k \geq 2, g \geq 3 $ (with the smallest
graphs called $(k,g)${\em -cages}). An exact solution 
for the Cage Problem is only known for very limited sets of parameter pairs $(k,g)$, in particular,
for the cases when the orders of the $(k,g)$-cages match a simple lower bound
due to Moore \cite{ExooJaj08}. These special graphs are called {\em Moore graphs}, and in the case
of even $g=6,8$ or $12$, Moore graphs are known to be the point-block incidence graphs of projective planes, generalized hexagons, and generalized quadrangles, respectively. 

Due to the universally acknowledged complexity of the Cage Problem - and also to 
gain additional insights - several relaxations of the original problem have been introduced
\cite{BobJajPis,ExooJaj08}. One
such relaxation focuses on biregular graphs of girth $g$ and calls for finding smallest graphs
of girth $g$ admitting a two-element degree set $\{ n,m \} $ 
\cite{AraExoJaj,ExooJaj16,FuLaSeUsWol95}. Naturally,
considering graphs from these more relaxed classes requires correspondingly adjusting the 
Moore bounds. Unlike the case of the classical Cage Problem, biregular graphs of degrees 
$n,m$ and girth $g$ whose orders match the adjusted Moore bound were shown to exist for
all pairs $n,m$ where $m$ is considerably larger than $n$ and $g$ is odd \cite{ExooJaj16}.
However, the case of even girth remains wide open \cite{AraExoJaj,YuLi03}. A well-known 
conjecture concerning $(k,g)$-cages of even girth asserts that all even girth cages ought to be
bipartite (see, for example, \cite{Fil17,JaFiJa16}). In connection to this conjecture, 
a further specialization of the biregular cage problem has been introduced in \cite{FilRamRivJaj19}, 
in which the authors proposed to study the {\em {bipartite biregular}} $(n,m;g)$-graphs which are
bipartite graphs of even girth $g$ having the degree set $\{n,m\}$ and satisfying the additional property that the vertices in the same partite set have the same degree. This is also the class we 
consider in our paper. The corresponding {\em adjusted Moore lower bounds} $B(n,m;2r)$ for the orders of the bipartite biregular $(n,m;2r)$-cages derived in \cite{FilRamRivJaj19} take the form:
\begin{eqnarray*}\label{Moore:bound-even}
1  + & (m-1)+(m-1)(n-1)+\ldots+(m-1)^{\frac{r}{2}}(n-1)^{\frac{r}{2}-1} +\\
1 + & (n-1)+(n-1)(m-1)+\ldots+(n-1)^{\frac{r}{2}}(m-1)^{\frac{r}{2}-1},
\end{eqnarray*}
when $r$ is even, and 
\begin{eqnarray*}\label{Moore:bound-odd}
1+ & (m-1)+(m-1)(n-1)+\ldots+(m-1)^{\frac{r-1}{2}}(n-1)^{\frac{r-1}{2}} + \\
1+ & (n-1)+(n-1)(m-1)+\ldots+(n-1)^{\frac{r-1}{2}}(m-1)^{\frac{r-1}{2}},
\end{eqnarray*}
when $r$ is odd. 

The difference between the order of a bipartite biregular $(n,m;2r)$-graph $G$ and 
the corresponding adjusted Moore bound $B(m,n;2r)$ is called the {\em excess} of $G$,
and in \cite{FilRamRivJaj19}, the authors succeeded at proving the non-existence of bipartite 
$(m,n;g)$-graphs of excess at most $4$ for all parameters $m, n, g$ where $g\geq10$ and is 
not divisible by $4$, and $m > n\geq3$. They also proved for all pairs $m,n$, $m>n\geq 3$, that the asymptotic density of the set of even girths $g\geq8$ for which there exists a bipartite $(m,n;g)$-graph with excess not exceeding $4$ is equal to $0$. Following
the usual way of naming graphs whose orders match the Moore bound, we shall call bipartite biregular $(n,m;2r)$-graphs whose orders match the corresponding adjusted Moore bound (\ref{Moore:bound-even}) or 
(\ref{Moore:bound-odd}) {\em  bipartite biregular Moore $(n,m;2r)$-cages}.

In this paper, we use a a connection to balanced incomplete block designs to present infinite families of bipartite biregular Moore $(n,m;6)$-cages; and, for larger girths, 
we explore a connection between bipartite biregular Moore $(n,m;g)$-graphs, 
$ g = 8,12 $ or $16$, and generalized polygons. Our approach is analogous to a well
established connection between the existence of $(k,g)$-Moore graphs and finite geometries.

\section{Preliminaries}
In this section we introduce definitions and theorems concerning block designs and generalized polygons which are well-known to experts in these fields. Nevertheless,
for the sake of completeness, we include these as they will be used throughout our paper.

\subsection{Block designs and Steiner systems}\label{BDSS}
\label{sec:pre}
All definitions and results given in this subsection can be found for example in \cite{ColDin}.

\begin{definition}
A {\em balanced incomplete block design} (BIBD) is a collection of $k$-subsets (called {\em blocks}) of a $v$-set $S$, $k<v$, such that each pair of elements of $S$ occur together in exactly $\lambda$  blocks. 
\end{definition}
The number of blocks is denoted by $b$ and the design is denoted as a $(v,k,\lambda)$-{\em design}.

\begin{theorem}
\label{eq}
In a $(v,k,\lambda)$-design with $b$ blocks each element occurs in $r$ blocks where:
\begin{enumerate}
\item $r(k-1)=\lambda(v-1)$, and
\item $vr=bk$.
\end{enumerate}
\end{theorem}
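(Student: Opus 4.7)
The plan is to establish both equalities by standard double counting arguments on incidence pairs, and to use the first count to simultaneously justify that the replication number $r$ is well defined (independent of the chosen element).

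For part (1), I would fix an arbitrary element $x \in S$ and count in two ways the number $N_x$ of ordered pairs $(y,B)$ such that $y \in S \setminus \{x\}$, $B$ is a block of the design, and both $x$ and $y$ belong to $B$. On one hand, for every one of the $v-1$ elements $y \neq x$, the pair $\{x,y\}$ appears in exactly $\lambda$ blocks by the definition of a BIBD, giving $N_x = \lambda(v-1)$. On the other hand, if I let $r_x$ denote the number of blocks through $x$, then each such block contributes exactly $k-1$ choices for $y$, so $N_x = r_x(k-1)$. Equating, $r_x(k-1) = \lambda(v-1)$, and since the right-hand side does not depend on $x$, the number $r_x$ is the same for every element. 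This common value is the replication number $r$, and it satisfies $r(k-1) = \lambda(v-1)$.

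For part (2), I would use a second double count, this time of the number $M$ of incidence pairs $(x,B)$ with $x \in B$. Summing over elements, each of the $v$ elements lies in $r$ blocks by part (1), so $M = vr$. Summing over blocks, each of the $b$ blocks contains exactly $k$ elements by definition, so $M = bk$. Hence $vr = bk$.

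Neither step presents a real obstacle; the only subtlety worth flagging is the logical order, namely that part (1) must be carried out first since it is what guarantees that $r$ is a constant independent of the chosen element, which is what makes the statement $vr = bk$ in part (2) meaningful. Both identities are instances of the fundamental double counting principle applied to the incidence structure of the design, and no further machinery is required.
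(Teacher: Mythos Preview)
Your argument is correct and is the standard double-counting proof of the BIBD parameter relations. Note that the paper itself does not supply a proof of this theorem: it is stated in the preliminaries as a known result taken from \cite{ColDin}, so there is no in-paper proof to compare against. Your proof is exactly the classical one found in that reference and elsewhere, and your remark about establishing the constancy of $r$ before using it in part (2) is the only point that requires any care.
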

Sometimes a $(v,k,\lambda)$-design is described as a $(v,b,r,k,\lambda)$-design, but it is important to realize that, once $v$, $k$ and $\lambda$ are known, the other two parameters are uniquely determined. 
The next concept is a generalization of a BIBD.

\begin{definition}
Let $t,\ v,\ k,\ \lambda$ be integers with $v\geq k\geq t \geq 2$ and $\lambda > 0$. A $t-(v,k,\lambda)$-{\em design} is a collection of $k$-subsets ({\em blocks}) of a $v$-set $S$ such that every $t$-subset of $S$ is contained in exactly $\lambda$ blocks. We observe that a $2$-design is just a BIBD. 
\end{definition}

A Steiner system is a $t$-design with $\lambda=1$, that is: 

\begin{definition}
A {\em{Steiner system}} $S(t,k,v)$ is a collection of $k$-subsets (blocks) of a $v$-subset $S$ such that every $t$-subset of $S$ is contained in exactly one of the blocks. Note that we use $S(t,k,v)$ as an equivalent to $t-(k,v,1)$. Note also that Steiner systems $S(2,k,v)$ are just $(v,k,1)$ BIBDs. 
\end{definition}

In this paper, we will be specifically interested in Steiner systems, for which we will show that their
point-block incidence graphs are related to  bipartite biregular cages; the main topic of our paper. 
In particular, we obtain our best results when $k=3$, in which case the Steiner systems $S(2,3,v)$  are called {\em Steiner triple systems} and are denoted by $STS(v)$.\\

The equalities given in Theorem \ref{eq} yield that if a Steiner system exists, it necessarily holds that $v\equiv 1  \pmod{k(k-1)}$ or $v\equiv k  \pmod{k(k-1)}$. 

Steiner systems are known to exist for certain specific values of $k$ and $v$, but the general question for 
which values of $k$ there exists a Steiner system is a well-known open problem; the congruences given above constitute necessary but not sufficient conditions. However, for $k=3$, it has already been proved by 
Kirkman that these conditions are both necessary and sufficient, and an $STS(v)$ exists if and only 
$ v \geq 7 $ is congruent to $1$ or $3$ modulo $6$ \cite{ColDin}.

\subsection{Generalized Polygons}\label{QHG}
To learn more about the definitions and results given in this subsection consult for
example \cite{VM98}. A {\em geometry} $ (\cal{P},\cal{L},{\bf I}) $ is a triple consisting
of disjoint sets $ \cal{P} $  and $ \cal{L} $, called {\em points} and {\em lines} related through the {\em incidence relation} $ {\bf I} \subseteq \cal{P} \times \cal{L} $ 
that determines which points from $ \cal{P} $ belong to which lines in $ \cal{L} $.
An {\em ordinary} $n${\em -gon} arises naturally from the regular $n$-gon in the 
plane. 

\begin{definition}
Let $n\geq 1$ be a natural number. A {\em weak generalized $n$-gon} is a geometry 
$\Gamma = (\cal{P},\cal{L},{\bf I})$ such that the following two axioms are satisfied: 
\begin{enumerate}
\item $\Gamma$ contains no ordinary $k$-gon (as a subgeometry) for $2\leq k < n.$
\item Any two elements $x,y \in \cal{P}\cup\cal{L}$ are contained in some ordinary $n$-gon (again as a subgeometry) in $\Gamma$. 
\vskip 2mm

\hskip -4.5mm
A {\em generalized $n$-gon} is a weak generalized $n$-gon $\Gamma$ which satisfies the additional axiom: 
\item There exists an ordinary $(n+1)$-gon (as a subgeometry) in $\Gamma$. 
\end{enumerate}
\end{definition}

As with the Steiner systems, we will be interested in the point-line incidence graphs
of (weak) generalized $n$-gons. The following lemma establishes the fundamental 
connection.

\begin{lemma}[\cite{VM98}, Lemma 1.3.6]\label{VMLemma}
A geometry $\Gamma = (\cal{P},\cal{L},{\bf I})$ is a (weak) generalized $n$-gon if and
only if the incidence graph of $\Gamma$ is a connected graph of diameter $n$ and
girth $2n$, such that each vertex lies on at least three (at least two) edges.
\end{lemma}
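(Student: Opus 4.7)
The plan is to prove both implications by translating between ordinary $k$-gons in $\Gamma$ and cycles of length $2k$ in the incidence graph $I(\Gamma)$. Since $I(\Gamma)$ is bipartite with bipartition $(\mathcal{P},\mathcal{L})$, every cycle has even length, and by definition an ordinary $k$-gon in $\Gamma$ is exactly a sequence of $k$ alternating points and lines closing up, i.e., a $2k$-cycle in $I(\Gamma)$. Similarly, a path in $\Gamma$ between two elements is nothing but a path in $I(\Gamma)$. This dictionary is the whole content of the equivalence and should be stated once at the start.

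For the forward direction, assume $\Gamma$ is a weak generalized $n$-gon. Axiom (1) forbids $2k$-cycles with $2 \leq k < n$, giving girth $\geq 2n$ in $I(\Gamma)$. Axiom (2), applied to any two elements, yields an ordinary $n$-gon through them, simultaneously establishing connectedness, diameter $\leq n$, and (by choosing two specific elements) a witnessing $2n$-cycle, hence girth exactly $2n$. Diameter equal to $n$ follows because if $x$ and $y$ were opposite vertices of a $2n$-cycle at distance $d < n$, a shortest path from $x$ to $y$ combined with one of the two $n$-halves of the cycle would produce a closed walk of length $n + d < 2n$ containing a cycle of length strictly less than $2n$, contradicting the girth. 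The minimum-degree condition is obtained by applying Axiom (2) (or Axiom (3)) to $x$ together with a well-chosen second vertex so that the resulting $n$-gon (respectively $(n+1)$-gon) forces two (respectively three) distinct edges at $x$.

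For the reverse direction, assume $I(\Gamma)$ is connected with diameter $n$, girth $2n$, and minimum degree at least two. The girth hypothesis translates at once to Axiom (1). To verify Axiom (2), given $x, y$ with $d(x,y) = d \leq n$, take a shortest $xy$-path $P$ of length $d$; one must exhibit a second $xy$-path $Q$ of length $2n - d$ internally disjoint from $P$, since $P \cup Q$ is then the desired ordinary $n$-gon. The main obstacle is the construction of $Q$ when $d < n$: the idea is to extend $P$ at each endpoint through a neighbor not on $P$ (available by the minimum-degree hypothesis), where the girth $2n$ guarantees no premature collision, and the diameter $n$ forces the extensions to meet in exactly $2n - d$ steps. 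The delicate point is ensuring that the two extensions can be routed to each other along a path of the right length without creating a short cycle; here one uses that girth $2n$ together with diameter $n$ makes the breadth-first tree of $I(\Gamma)$ rooted at $x$ essentially tree-like up to depth $n$. Finally, to upgrade from weak to full, the hypothesis of minimum degree $\geq 3$ provides an extra edge at some vertex of an ordinary $n$-gon supplied by the just-proved Axiom (2), from which an ordinary $(n+1)$-gon is assembled by one more application of Axiom (2), yielding Axiom (3); the converse is a short case analysis showing that the $2(n+1)$-cycle arising from an ordinary $(n+1)$-gon, together with the $2n$-cycles guaranteed by Axiom (2) through its vertices, forces some vertex to have three distinct neighbors.
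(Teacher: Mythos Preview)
The paper does not supply its own proof of this lemma; it is quoted verbatim as Lemma~1.3.6 of Van Maldeghem~\cite{VM98} and used as a black box. So there is no in-paper argument to compare against, and your sketch should be judged on its own merits.

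The forward direction has a genuine gap in the handling of the degree condition for the non-weak case. You write that the minimum-degree condition is obtained ``by applying Axiom~(3) to $x$ together with a well-chosen second vertex so that the resulting $(n+1)$-gon forces three distinct edges at $x$''. But Axiom~(3) asserts only the \emph{existence} of a single ordinary $(n+1)$-gon somewhere in $\Gamma$; it cannot be ``applied to $x$'', and there is no a~priori reason the $(n+1)$-gon contains $x$. What is actually needed is the non-trivial fact that in a weak generalized $n$-gon, thickness at one element propagates to thickness everywhere: from one $(n+1)$-gon one must deduce that \emph{every} vertex of the incidence graph has degree at least three. This is precisely the content of the surrounding lemmas in \cite{VM98} (e.g.\ the analysis of thin/thick elements in Section~1.3 there), and it takes real work; your sketch glosses over it entirely.

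The remaining steps are essentially correct in outline. Your diameter-equals-$n$ argument can be replaced by the one-line observation that in any bipartite graph the girth is at most twice the diameter, so girth $2n$ forces diameter $\geq n$. In the reverse direction your construction of the second path $Q$ of length $2n-d$ is sketchy---the phrase ``the diameter $n$ forces the extensions to meet in exactly $2n-d$ steps'' is doing a lot of unexplained work---but the idea of extending at the endpoints using the minimum-degree hypothesis and invoking girth to avoid collisions is the right one and can be made rigorous by an induction on $n-d$.
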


Every generalized polygon $\Gamma$ can be associated with a pair $(s,t)$, 
called the {\em order} of $\Gamma$, such that every line is incident with $s+1$
points, and every point is incident with $t+1$ lines \cite{VM98}. This means, 
in particular, that the incidence graph of $\Gamma$ is a bipartite biregular graph
with the two degrees $s+1$ and $t+1$. In addition, the following result 
determines the orders of the two partite sets.

\begin{theorem}[\cite{VM98}, Corollary 1.5.5]\label{PL}
If there exists a $\Gamma = (\cal{P},\cal{L},{\bf I})$ (weak) generalized $n$-gon of order $(s,t)$ for $n \in \{3,4,6,8\}$ then:
\[ | {\cal P} | = \left\{ \begin{array}{cr} s^2+s+1 & \mbox{ if } n=3; \\
(1+s)(1+st), & \mbox{ if } n=4; \\
(1+s)(1+st+s^2t^2), & \mbox{ if } n=6; \\ (1+s)(1+st)(1+s^2t^2), & \mbox{ if } n=8.
\end{array} \right. \]
\[ |{\cal L} | = \left\{ \begin{array}{cr} t^2+t+1 & \mbox{ if } n=3; \\
(1+t)(1+st), & \mbox{ if } n=4; \\
(1+t)(1+st+s^2t^2), & \mbox{ if } n=6; \\ (1+t)(1+st)(1+s^2t^2), & \mbox{ if } n=8.
\end{array} \right. \]
\end{theorem}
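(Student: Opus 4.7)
My plan is to prove the theorem by a distance-layer counting argument in the incidence graph $I(\Gamma)$. By Lemma \ref{VMLemma}, $I(\Gamma)$ is a connected bipartite graph of diameter $n$ and girth $2n$ in which points have degree $t+1$ and lines have degree $s+1$. I would fix a point $p \in \mathcal{P}$ and, for each $0 \le d \le n$, let $V_d$ denote the set of vertices at distance exactly $d$ from $p$ in $I(\Gamma)$. Because $I(\Gamma)$ is bipartite, $V_d \subseteq \mathcal{P}$ when $d$ is even and $V_d \subseteq \mathcal{L}$ when $d$ is odd; because the diameter equals $n$, the sets $V_0,V_1,\dots,V_n$ partition $V(I(\Gamma))$, so $|\mathcal{P}| = \sum_{d \text{ even}} |V_d|$.

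I would next establish the tree-type counts $|V_0|=1$, $|V_{2k+1}|=(t+1)(st)^k$, and $|V_{2k}|=s(t+1)(st)^{k-1}$ (for $k\ge 1$), valid for all layers with $d\le n-1$. The key observation is that the girth bound $g=2n$ forces the BFS subgraph up to depth $n-1$ to be a tree: two distinct shortest paths from $p$ to a common vertex at distance at most $n-1$ would concatenate into a closed walk of length at most $2n-2 < g$, producing a cycle shorter than the girth, a contradiction.

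The deepest layer $V_n$ requires a separate double-counting argument because the tree pattern breaks at the diameter. Since the diameter is $n$, no vertex of $V_n$ can have a neighbor outside $V_{n-1}$, so all of its incidences lie in $V_{n-1}$. Counting the edges between $V_{n-1}$ and $V_n$ from both sides yields $|V_{n-1}|\cdot s = |V_n|\cdot (t+1)$ when $n$ is even, and therefore $|V_n|=s^{n/2}t^{n/2-1}$. Adding this to $|V_0|+|V_2|+\cdots+|V_{n-2}|$ as given by the tree formulas and factoring the resulting polynomial produces exactly the closed forms $(1+s)(1+st)$, $(1+s)(1+st+s^2t^2)$, and $(1+s)(1+st)(1+s^2t^2)$ claimed for $n=4,6,8$, respectively. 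For $n=3$, the final layer $V_3$ consists of lines, so the point count is simply $|V_0|+|V_2|=1+s(t+1)$; combined with the equality $s=t$ forced for projective planes, this equals $s^2+s+1$. The formulas for $|\mathcal{L}|$ are obtained by running the symmetric argument with the BFS rooted at a line vertex, which amounts to swapping the roles of $s$ and $t$ throughout.

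The main subtlety will be the last-layer double count: one has to use that diameter $n$ rules out a layer $V_{n+1}$, so every neighbor of a vertex in $V_n$ already lies in $V_{n-1}$. Without this observation the naive tree formula would overcount $V_n$ by exactly the factor $t+1$ that the double count divides out, and the factorisations into the stated closed forms would fail.
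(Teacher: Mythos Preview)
The paper does not supply its own proof of this theorem; it is quoted verbatim as Corollary~1.5.5 from Van Maldeghem's monograph \cite{VM98}, so there is nothing in the paper to compare your argument against. That said, your distance-layer counting in the incidence graph is correct and is essentially the standard proof: the girth $2n$ forces the BFS tree from a fixed point to be an honest tree down to depth $n-1$, giving the layer sizes you state, and the diameter constraint $n$ forces every neighbour of a vertex in $V_n$ to lie in $V_{n-1}$, which is exactly what makes the double count $|V_{n-1}|\cdot s = |V_n|\cdot (t+1)$ work. Summing and factoring then gives the stated closed forms for $n=4,6,8$.

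The only point that deserves one extra word is the case $n=3$. Your count gives $|\mathcal{P}| = 1 + s(t+1) = 1+s+st$, and you then invoke $s=t$. This is correct, but it is worth noting \emph{why} a generalized triangle of order $(s,t)$ must have $s=t$: in a generalized $3$-gon any two points lie on a common line and any two lines meet in a point, so the structure is a projective plane, and a short double count of flags on two intersecting lines (or the standard projective-plane argument) forces the number of points per line to equal the number of lines per point. With that remark in hand your $n=3$ case is complete as well, and the dual BFS rooted at a line indeed yields the $|\mathcal{L}|$ formulas by interchanging $s$ and $t$.
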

In 1964, Feit and Higman proved that finite generalized $n$-gons exist only for $n=\{3,4,6,8\}$. When $n=3$, we have the projective planes; when $r=4$, we have the generalized quadrangles, which are known to exist when $s=t$ or $t=s^2$; when $r=6$, we have the generalized hexagons, which also exist in the regular case $s=t$ 
and when $t=s^3$; and finally, for $r=8$, we have the generalized octagons, which only exist when $s=t^2$. 

The rest of the paper is dedicated to our contributions.
\section {Bipartite biregular cages of girth 6}
\label{sec:girth6}

As argued in \cite{FilRamRivJaj19,YuLi03}, any $(n,m;6)$-graph has at least the following number of vertices 
\begin{equation}
\label{eq:1}
n_0(n,m;6)=\left\lceil n+m+2(m-1)(n-1)+ \frac{(m-1)(n-1)(m-n)}{n}\right\rceil =\left\lceil \frac{(m+n)(mn-m+1)}{n}\right\rceil,
\end{equation}
and we shall call the graphs whose orders attain this lower bound the {\em $(n,m;6)$-Moore cages}. In general, given an $(n,m;6)$-graph $G$, {\em the excess} of $G$ is 
the difference $e=|V(G)|-n_0(n,m;6)$.

\subsection{Bipartite biregular Moore cages associated to Steiner systems}
In this section we will prove that the existence of a Steiner system immediately yields
the existence of a bipartite biregular Moore cage.

\begin{theorem}
\label{the: S(2,k,v)}
The point-block incidence graph $G$ of a Steiner system $S(2,k,v)$, $ k \geq 3 $, is a 
(bipartite biregular) $(k,r;6)$-Moore cage, where $r=\frac{v-1}{k-1}$. 
\end{theorem}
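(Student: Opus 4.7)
The plan is to work directly with the point-block incidence graph $G$ of the given Steiner system $S(2,k,v)$, with partite sets $P$ (the $v$ points) and $\mathcal{B}$ (the $b$ blocks), where a point $p\in P$ is adjacent to a block $B\in\mathcal{B}$ precisely when $p\in B$. From this description I would verify in turn the three defining properties of a bipartite biregular $(k,r;6)$-Moore cage: (i) bipartite biregularity with degrees $k$ and $r=(v-1)/(k-1)$; (ii) girth equal to $6$; and (iii) order equal to the Moore bound in formula~(\ref{eq:1}).

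Property (i) is immediate: every block contains exactly $k$ points, so each vertex in $\mathcal{B}$ has degree $k$, and by Theorem~\ref{eq} every point lies in exactly $r$ blocks, so each vertex in $P$ has degree $r$. For property (ii), any $4$-cycle in $G$ would consist of two distinct points simultaneously contained in two distinct blocks, contradicting the Steiner property $\lambda=1$; hence the girth is at least $6$. To exhibit a $6$-cycle, I would fix a point $p_1$ together with two distinct blocks $B_1,B_2$ through $p_1$ (these exist because $r\geq 2$ whenever $v>k$, which is the only non-degenerate case), choose $p_2\in B_1\setminus\{p_1\}$ and $p_3\in B_2\setminus\{p_1\}$ (these are distinct because $\lambda=1$ forces $B_1\cap B_2=\{p_1\}$), and then let $B_3$ denote the unique block through $\{p_2,p_3\}$. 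The same $\lambda=1$ argument shows $B_3\notin\{B_1,B_2\}$, and the desired $6$-cycle is $p_1 B_1 p_2 B_3 p_3 B_2 p_1$.

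The third step reduces to verifying the numerical identity
\[|V(G)|=v+b=\frac{(k+r)(kr-r+1)}{k}=n_0(k,r;6).\]
The key simplification is $kr-r+1=r(k-1)+1=(v-1)+1=v$, which uses the relation $r(k-1)=v-1$ from Theorem~\ref{eq}. Substituting this into the right-hand side yields $n_0(k,r;6)=v(k+r)/k=v+vr/k=v+b$, where the last equality is $vr=bk$, also from Theorem~\ref{eq}. Since the result is an integer, the ceiling in (\ref{eq:1}) is inert.

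I expect no deep obstacle: each of the three verifications reduces to the counting identities of Theorem~\ref{eq} combined with the defining property $\lambda=1$. The only point requiring minor care is the exhibition of the $6$-cycle, which implicitly assumes the Steiner system is non-degenerate ($v>k$, equivalently $r\geq 2$); this is automatic in every case of genuine interest, since the necessary congruence conditions $v\equiv 1$ or $k\pmod{k(k-1)}$ together with $k\geq 3$ already force $v$ to exceed $k$.
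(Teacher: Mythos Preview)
Your proposal is correct and follows essentially the same approach as the paper: both verify bipartite biregularity from the basic counts, rule out $4$-cycles via $\lambda=1$, exhibit a $6$-cycle, and then check that $v+b$ matches the bound in~(\ref{eq:1}) using the identities $r(k-1)=v-1$ and $vr=bk$ from Theorem~\ref{eq}. Your construction of the $6$-cycle (starting from a point on two blocks) is a minor variant of the paper's (starting from three non-collinear points), and your arithmetic for the order is in fact slightly cleaner than the paper's, but the substance is the same.
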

\begin{proof}
The fact that the point-block incidence graph $G$ of a Steiner system $S(2,k,v)$,
$ k \geq 3 $, is a $(k,r;6)$-graph is fairly obvious: Each point belongs to $r$ blocks,
each block contains $k$ vertices, $G$ is bipartite and therefore contains even length
cycles only, and the existence of a $4$-cycle in $G$ would force the two vertices of
the $4$-cycle to belong to both of its blocks, which would violate the condition $ \lambda 
=1 $. Since every pair of vertices belongs to a block, any $3$-set of vertices 
not belonging to the same block of the Steiner system together with the three blocks containing the three $2$-subsets of this $3$-set forms a $6$-cycle.

Thus, to complete the proof of our theorem, we only need to show that the order of the
point-block incidence graph $G$ of a Steiner system $S(2,k,v)$ attains the lower 
bound (\ref{eq:1}). Using Theorem~\ref{eq} implies $v=r(k-1)+1$ and $r=bk/v$, and 
therefore
$$ | V(G) | = v+b=(r(k-1)+1) + \left(\frac{r^2(k-1)+r}{k}\right)$$
$$=\frac{(r+k)(rk-r+1)}{k} =\left\lceil \frac{(r+k)(rk-r+1)}{k} \right\rceil.$$ 
Hence, $ | V(G) | = n_0(r,k;6) $, as defined in (\ref{eq:1}), and the incidence graph of 
the Steiner system $S(2,k,v)$ is a $(k,r;6)$-Moore cage. 
\end{proof}

Obviously, the above graphs are truly biregular only if $ r \neq k $. Recall that if 
the Steiner system $S(2,k,v)$ is in fact a projective plane of order $r-1$, then $r=k$.
Thus, Theorem~\ref{the: S(2,k,v)} is a generalization of the well-known result 
stating that
the point-line incidence graph of a projective plane of order $q$ is a $(q+1,6)$-Moore
cage (see, e.g., \cite{ExooJaj08}).
Moreover, as the affine plane of order $q$ is also a Steiner system $S(2,q,q^2)$, 
we have the following result:  

\begin{corollary}
\label{cor:affine planes}
The point-line incidence graph $G$ of an affine plane of order $q$ is a 
(bipartite biregular) $(q,q+1;6)$-Moore cage. 
\end{corollary}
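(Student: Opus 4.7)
The plan is simply to invoke Theorem~\ref{the: S(2,k,v)} with the parameters of the affine plane, so the bulk of the work is already done by that theorem. First I would recall the standard fact that an affine plane of order $q$ has $v=q^2$ points, every line contains exactly $k=q$ points, and every pair of distinct points lies on a unique line. This is precisely the definition of a Steiner system $S(2,q,q^2)$, so the point-line incidence structure of the affine plane fits the hypothesis of Theorem~\ref{the: S(2,k,v)}.

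Next I would compute the second degree in the incidence graph by plugging into the formula $r=\frac{v-1}{k-1}$ provided by Theorem~\ref{the: S(2,k,v)}:
\[
r \;=\; \frac{q^2-1}{q-1} \;=\; q+1,
\]
which indeed matches the well-known count of lines through any point of an affine plane. Theorem~\ref{the: S(2,k,v)} then immediately gives that the point-line incidence graph is a $(q,q+1;6)$-Moore cage, and since $q\neq q+1$, the graph is genuinely biregular. There is no real obstacle here; the corollary is a direct specialization of the preceding theorem once the affine plane is recognized as the appropriate Steiner system.
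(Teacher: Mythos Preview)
Your proof is correct and follows exactly the paper's approach: recognize the affine plane of order $q$ as a Steiner system $S(2,q,q^2)$ and apply Theorem~\ref{the: S(2,k,v)}, computing $r=(q^2-1)/(q-1)=q+1$. The only minor point is that Theorem~\ref{the: S(2,k,v)} is stated for $k\geq 3$, so strictly speaking one needs $q\geq 3$; the paper's statement of the corollary leaves this implicit as well.
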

In particular, Figure 5 in \cite{FilRamRivJaj19} is the incidence graph of a affine plane of order $3$. 

\subsection{Bipartite biregular  cages associated with Steiner triple system.}
The results of the previous subsection show that the existence of a Steiner system
$S(2,k,v)$ gives rise to a bipartite biregular $(k,r;6)$-Moore cage, where the parameter
$r$ is determined by the parameters $k$ and $v$. Since no classification of the 
parameter pairs $(k,v)$ admitting the existence of an $S(2,k,v)$ exists, this type of 
result does not allow for classifying the parameter pairs $(k,r)$ admitting the existence
of a bipartite biregular $(k,r;6)$-Moore cage.

As pointed out in Section~\ref{BDSS}, the $k=3$ situation is different in that we have
a classification of the pairs $(3,v)$ that admit the existence of an $S(2,3,v)$.
Thus, in this section we will focus on Steiner triple systems and determine the orders
of the $(3,m;6)$-bipartite biregular cages for all values of $m$.
 
Recall that a Steiner triple system $STS(v)$ exists if and only if $ v \geq 7 $ and 
$v\equiv 1,3 \ (\mod{6})$; in which case $ r = \frac{v-1}{k-1} = \frac{1}{2}(v-1) $.
Thus, Theorem~\ref{the: S(2,k,v)} implies the existence of a bipartite biregular 
$(k,r;6)$-Moore cage for all $r$ of the form $ \frac{1}{2}(6m+1-1)=3m$ or $
 \frac{1}{2}(6m+3-1)=3m+1$, $ m \geq 1 $. Hence, we know the orders of 
 $(3,m;6)$-bipartite biregular cages for two thirds of parameters $m$, $ m \equiv 0 $
 and $ m \equiv 1 \pmod{3} $. In the rest
 of this section, we will focus on the remaining unresolved case $ m \equiv 2 \pmod{3} $.
 
\begin{lemma}
\label{le:5}
Let $m>4$ be an integer. If $m\equiv 2 \pmod{3}$, then the order $|V(G)|$ of any 
$(3,m;6)$-graph $G$ satisfies the inequality:
$$\frac{2m^2+7m+3}{3}+\frac{m+3}{3} \leq  |V(G)|.$$
\end{lemma}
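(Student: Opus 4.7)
The plan is to combine the Moore lower bound of equation~(\ref{eq:1}) with a divisibility constraint coming from bipartite biregularity, which forces the order of $G$ to live on a discrete arithmetic progression and thereby pushes the bound up by exactly $(m+3)/3$. First I would set up the counting. Let $G$ be a bipartite biregular $(3,m;6)$-graph with bipartition $(U,V)$, where every vertex of $U$ has degree $m$ and every vertex of $V$ has degree $3$. Counting edges from each side gives $m|U|=3|V|$, and since $m\equiv 2\pmod{3}$ yields $\gcd(m,3)=1$, we must have $3\mid|U|$. Writing $|U|=3t$ for a positive integer $t$ gives $|V|=mt$, and hence
$$|V(G)| \;=\; (m+3)\,t.$$
In other words, the admissible orders form the arithmetic progression $\{(m+3)t:t\in\N\}$.

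Second, I would invoke the Moore bound (\ref{eq:1}) for $n=3$, which reads
$$|V(G)|\;\geq\; \left\lceil\frac{(m+3)(2m+1)}{3}\right\rceil \;=\; \left\lceil\frac{2m^2+7m+3}{3}\right\rceil.$$
A direct modular computation shows that if $m\equiv 2\pmod{3}$ then $2m^2+7m+3\equiv 1\pmod{3}$, so the ceiling equals $(2m^2+7m+5)/3$. Combining this with $|V(G)|=(m+3)t$, performing the polynomial division $2m^2+7m+5=(m+3)(2m+1)+2$, and dividing through by $m+3$ gives
$$t\;\geq\;\frac{2m+1}{3}+\frac{2}{3(m+3)}.$$

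Third, I would extract the least admissible integer value of $t$. Writing $m=3k+2$, we compute $(2m+1)/3=2k+1+\tfrac{2}{3}$, and the strictly positive error term $2/(3(m+3))$ rules out $t=2k+1$. Hence $t\geq 2k+2=(2m+2)/3$, and substituting back gives
$$|V(G)|\;\geq\;\frac{(2m+2)(m+3)}{3}\;=\;\frac{2m^2+8m+6}{3}\;=\;\frac{2m^2+7m+3}{3}+\frac{m+3}{3},$$
which is the inequality claimed in the lemma.

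I do not anticipate any serious obstacle: the whole argument is a divisibility refinement of the Moore bound. The only points that require care are the two mod~$3$ computations (the residue of $2m^2+7m+3$ and the ceiling of $(2m+1)/3$ when $m=3k+2$), both of which are mechanical once one parametrises $m=3k+2$. The hypothesis $m>4$ simply guarantees the smallest admissible case $m=5$ and ensures that the correction $(m+3)/3$ is a genuine improvement over the Moore bound rather than a degenerate statement.
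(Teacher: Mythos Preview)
Your proof is correct and follows essentially the same approach as the paper: both combine the Moore bound (\ref{eq:1}) with the divisibility constraint arising from the bipartite edge count $m|U|=3|V|$ to force the order onto the progression of multiples of $m+3$, and then observe that the smallest multiple at or above the Moore bound is $(2m^2+8m+6)/3$. The only cosmetic difference is that the paper works with the excess $e$ and the integrality of $a=|U|$ to conclude $(2+3e)/(m+3)\geq 1$, whereas you parametrise $|U|=3t$ via $\gcd(m,3)=1$ and bound $t$ directly; these are two packagings of the same computation.
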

\begin{proof}
Let $m>4$, $m\equiv 2 \pmod{3}$, and let $G$ be a $(3,m;6)$-bipartite biregular graph with
the sets $A$ and $B$ forming the partition of $V(G)$; with $A$ of cardinality $a$ 
containing the vertices of degree $m$, and $B$ of cardinality $b$ containing the vertices of degree $3$. The order of $G$ is then equal to $a+b$, and since the number of edges 
leaving $A$ must be equal to the number of edges entering $B$, $am=3b$.
The lower bound from formula (\ref{eq:1}) takes the form
\[ n_0(3,m;6)=\left\lceil \frac{(m+3)(3m-m+1)}{3}\right\rceil = \left\lceil  \frac{2m^2+7m+3}{3}   \right\rceil , \]
and since $ m \equiv 2 \pmod{3} $,
\[ n_0(3,m;6)= \left\lceil  \frac{2m^2+7m+3}{3}   \right\rceil = \frac{2m^2+7m+3}{3} + \frac{2}{3} . \]

\noindent
Let $e$ denote the excess of $G$ (as compared to $n_0(3,m;6)$), and recall that $b=\frac{am}{3}$. Then,
$$ |V(G) | =\frac{2m^2+7m+3}{3} + \frac{2}{3} + e = \frac{2m^2+7m+3+2+3e}{3} , $$
while also $ |V(G) | = a+\frac{am}{3} $,
and therefore 
$$a(3+m)=2m^2+7m+3+2+3e . $$
In particular, $a=\frac{2m^2+7m+3+2+3e}{m+3}$ must be an integer.
Since $2m^2+7m+3 = (m+3)(2m+1)$, the expression $m+3$ divides $2m^2+7m+3$ evenly, and therefore 
the remaining fraction $\frac{2+3e}{m+3}$ must be an integer
as well. However, if $e$ were smaller than $\frac{m+1}{3}$, we would have $\frac{2+3e}{m+3} < 1$, and 
therefore, $ \frac{m+1}{3}  \leq e$. 

Thus, 
\[ |V(G)| = \frac{2m^2+7m+3+2}{3}+e \geq \frac{2m^2+7m+3}{3}+\frac{m+3}{3} . \]
 
\end{proof}

\begin{theorem}
\label{le:STS}
Let $m>3$ be an integer. 
\begin{enumerate}
\item If $m \equiv 0,1 \pmod{3}$, then a $(3,m;6)$-biregular bipartite cage is the point-block incidence graph of 
the Steiner triple system $STS(2m+1)$ of order $n_0(3,m;6)$. 
\item If $m \equiv 2 \pmod{3}$, then the order of a $(3,m;6)$-biregular bipartite cage is 
$n_0(3,m;6)+\frac{m+3}{3}$. 
\end{enumerate}
\end{theorem}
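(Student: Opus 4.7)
The plan is to treat the two cases separately. For the case $m\equiv 0,1\pmod{3}$ I would invoke Kirkman's classical result that a Steiner triple system $STS(v)$ exists exactly when $v\equiv 1,3\pmod{6}$. With $v:=2m+1$, this congruence holds precisely when $m\equiv 0,1\pmod 3$, and one has $r=(v-1)/2=m$. Theorem~\ref{the: S(2,k,v)} then directly delivers the point-block incidence graph as a bipartite biregular $(3,m;6)$-Moore cage, and being a Moore cage it is automatically a cage.

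For the remaining case $m\equiv 2\pmod 3$, the lower bound is already supplied by Lemma~\ref{le:5}, so the task reduces to constructing a $(3,m;6)$-bipartite biregular graph whose order equals the bound $\tfrac{2m^2+8m+6}{3}=\tfrac{2(m+1)(m+3)}{3}$. Write $m=3\ell+2$ with $\ell\geq 1$; then $v':=2m+3=6\ell+7\equiv 1\pmod 6$, so $STS(2m+3)$ exists by Kirkman's theorem. I would carry out the following "delete-a-point" construction: fix any point $\infty$ of $STS(2m+3)$ and remove $\infty$ together with every block containing $\infty$, obtaining a partial Steiner triple system $\mathcal{S}$ on $2m+2$ points, and take the point-block incidence graph $G$ of $\mathcal{S}$.

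The verification then proceeds in four short steps. First, since $STS(2m+3)$ is a Steiner system, the point $\infty$ lies in exactly $r=(v'-1)/2=m+1$ blocks, and these blocks partition the remaining $2m+2$ points into $m+1$ disjoint pairs; consequently every surviving point loses exactly one block and is now incident with $m$ blocks of $\mathcal{S}$, so $G$ is biregular with degrees $m$ and $3$. Second, the number of surviving blocks is $\tfrac{(2m+3)(m+1)}{3}-(m+1)=\tfrac{2m(m+1)}{3}$ (an integer since $m+1\equiv 0\pmod 3$), giving $|V(G)|=(2m+2)+\tfrac{2m(m+1)}{3}=\tfrac{2(m+1)(m+3)}{3}$, matching Lemma~\ref{le:5}. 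Third, no two points of $\mathcal{S}$ lie in more than one common block (inherited from the Steiner property), so $G$ contains no $4$-cycle and has even girth at least $6$. Fourth, for $m\geq 5$ the existence of a $6$-cycle is easy: pick three points no two of which form a "deleted" pair and which are not collinear, and use the three distinct blocks covering the three $2$-subsets.

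The main obstacle is to identify the right construction in the $m\equiv 2\pmod 3$ case, because no $STS(2m+1)$ exists there and an arbitrary near-Steiner structure need not be biregular. The decisive observation is that passing from $STS(2m+3)$ by removing a single point produces exactly the right drop in degree (from $m+1$ to $m$) on every remaining point, precisely because the Steiner property forces the deleted blocks to use each remaining point exactly once; this is what makes the construction biregular with the right parameters. Beyond locating this construction, every verification step is elementary.
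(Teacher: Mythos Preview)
Your proposal is correct and follows essentially the same route as the paper: Kirkman plus Theorem~\ref{the: S(2,k,v)} for $m\equiv 0,1\pmod 3$, and for $m\equiv 2\pmod 3$ the lower bound from Lemma~\ref{le:5} together with the ``delete a point and its blocks'' construction applied to $STS(2m+3)$. The only difference is cosmetic: to pin the girth at exactly $6$ you exhibit an explicit $6$-cycle, whereas the paper instead observes that $|V(G)|=\tfrac{2m^2+8m+6}{3}$ falls below the $(3,m;8)$ Moore bound $2m^2+5m-3$, forcing the girth to be $6$.
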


\begin{proof} 
If $m \equiv 0,1 \pmod{3}$, then $2m+1 \equiv 1 $ or $3 \pmod{6}$, the Steiner triple system 
$STS(2m+1)$ exists, and its point-block incidence graph of order $n_0(3,m;6)$
is a $(3,m;6)$-biregular bipartite cage as argued in Theorem~\ref{the: S(2,k,v)}.

Next assume that $m \equiv 2 \pmod{3}$. Lemma~\ref{le:5} asserts that the order of any 
$(3,m;6)$-biregular bipartite graph $G$ is greater than or equal to $n_0(3,m;6)+\frac{m+3}{3}$. 
Thus, to prove the second claim of our theorem, we just need to prove the existence of a
$(3,m;6)$-biregular bipartite graph $G$ of order equal to $n_0(3,m;6)+\frac{m+3}{3}$.

Since $m \equiv 2 \pmod{3}$, it follows that $2m+3 \equiv 1 \ (\mod{6})$, and thus an $STS(2m+3)$
exists. Let $G'$ be the $(3,m+1;6)$-biregular bipartite incidence graph of this Steiner triple system.
We shall construct the desired $(3,m;6)$-biregular bipartite graph $G$ of order $n_0(3,m;6)+\frac{m+3}{3}$
by removing vertices and edges from the graph $G'$.

Let $x$ be a point and $b$ be a block of the $STS(2m+3)$ containing $x$. Let $f=(x,b)$ be an edge of 
$G'$, and let $G=G'[X]$ be the subgraph of $G'$ induced by the vertex set $X=V(G')\backslash (x\cup N(x))$,
where $N(x)$ is the neighborhood of $x$ in $G'$, i.e., the set of all blocks of the $STS(2m+3)$ containing
$x$. We claim that $G$ is a $(3,m;6)$-biregular bipartite graph (see Figure \ref{fig:G}). 
First, none of the remaining blocks contains the 
point $x$, and hence each of them is connected to $3$ of the remaining points. Hence all the remaining
blocks represent vertices of degree $3$ and no two of them are adjacent. As for the remaining points, they
all have been originally adjacent to $m+1$ blocks, and are now adjacent to one less block - the block they
shared with $x$ - and hence represent mutually non-adjacent vertices of degree $m$. Since $G$ contains
no vertices of degree $1$ or $0$, it is not a forest and contains at least one cycle. We claim that the number
of vertices of $G$ is too small for $G$ to be of girth $8$. To prove this, as well as to prove that $G$ is a 
$(3,m;6)$-cage, we determine the order of $G$:
$$
\begin{array}{ccccccccc}
|V(G)|&=&|V(G')|-1-(m+1)\\
&=&\frac{2(m+1)^2+7(m+1)+3}{3}-2-m\\ 
&=&\frac{2m^2+4m+2+7m+7+3-6-3m}{3}\\
&=&\frac{2m^2+8m+6}{3} \\
&=&\frac{2m^2+7m+3}{3}+\frac{m+3}{3}\\
&=&n_0(3,m;6)+\frac{m+3}{3}.\\

\end{array}
$$
Thus, $G$ has the desired order. As proved in \cite{FilRamRivJaj19}, the order of any $(3,m;8)$-biregular
bipartite graph is bounded from below by 
\[ 1 + 2 + 2(m-1) + 4(m-1) + 1 + (m-1) + 2(m-1) + 2(m-1)^2 = -3 + 5m + 2m^2, \]
a value that is clearly bigger than $ |V(G)| = \frac{2m^2+8m+6}{3} $, and thus the girth of $G$ cannot 
be $8$.

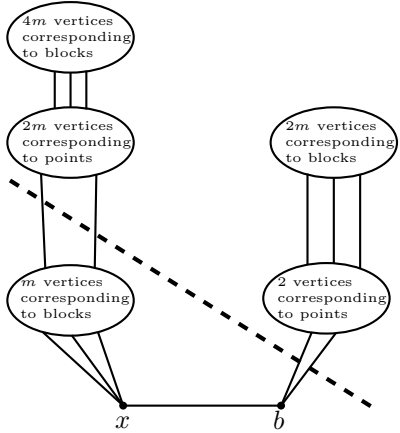
\begin{figure}[h!]
\begin{center}
\psset{xunit=.7cm,yunit=.7cm,algebraic=true,dimen=middle,dotstyle=o,dotsize=3pt 0,linewidth=0.8pt,arrowsize=3pt 2,arrowinset=0.25}
\begin{pspicture*}(5.2,1.45)(13.68,9.8)
\rput{0}(7,4){\psellipse(0,0)(1.2,0.67)}
\rput{0}(11.86,4.04){\psellipse(0,0)(1.2,0.67)}
\rput{0}(7,7){\psellipse(0,0)(1.2,0.67)}
\rput{0}(12,7){\psellipse(0,0)(1.2,0.67)}
\rput{0}(7,9){\psellipse(0,0)(1.2,0.67)}
\psline(8,2)(11,2)

\psline(8,2)(7.02,3.34)
\psline(8,2)(7.52,3.4)
\psline(8,2)(6.47,3.4)

\psline(11,2)(12.04,3.38)
\psline(11,2)(11.58,3.39)

\psline(12,4.7)(12,6.33)
\psline(12.5,4.6)(12.5,6.39)
\psline(11.5,4.67)(11.5,6.41)

\psline(6.52,4.61)(6.44,6.41)
\psline(7.46,4.62)(7.49,6.39)

\psline(6.7,7.64)(6.7,8.36)
\psline(7,7.67)(7,8.33)
\psline(7.3,7.63)(7.3,8.37)

\rput[lt](10.93,4.44){\parbox{1.6 cm}{\tiny{$2$ vertices  \\ corresponding \\  to points}}}
\rput[lt](6.05,4.44){\parbox{1.6 cm}{\tiny{$m$ vertices  \\ corresponding \\  to blocks}}}
\rput[lt](6.08,7.4){\parbox{1.65 cm}{\tiny{$2m$ vertices  \\ corresponding \\  to points}}}
\rput[lt](11.08,7.4){\parbox{1.65 cm}{\tiny{$2m$ vertices  \\ corresponding \\  to blocks}}}
\rput[lt](6.08,9.41){\parbox{1.65 cm}{\tiny{$4m$ vertices  \\ corresponding \\  to blocks}}}
\rput[tl](7.85,1.8){$x$}
\rput[tl](10.85,1.9){$b$}
\psline[linewidth=1.6pt,linestyle=dashed,dash=4pt 4pt](5.85,6.28)(12.7,1.99)
\begin{scriptsize}
\psdots[dotstyle=*](8,2)
\psdots[dotstyle=*](11,2)
\end{scriptsize}
\end{pspicture*}
\caption{A sketch of how to obtain $G$.}
\label{fig:G}
\end{center}
\end{figure}
\end{proof}

For an example, consider Figure \ref{fig:m5} which depicts a part of the bipartite biregular graph $G'$ that is a $(3,6;6)$-Moore cage. It is the incidence graph of a Steiner triple system $STS(13)$ with the blue points corresponding to the $13$ elements (denoted as $\{0,1,\ldots,9,A,B,C\}$) and the red points corresponding to the $26$ blocks (note that the edges that are not drawn in Figure \ref{fig:m5} can be easily determined by the incidence rule: the point 
$x$ belongs to the blocks $xyz$ independently of the values of $y$ an $z$). 

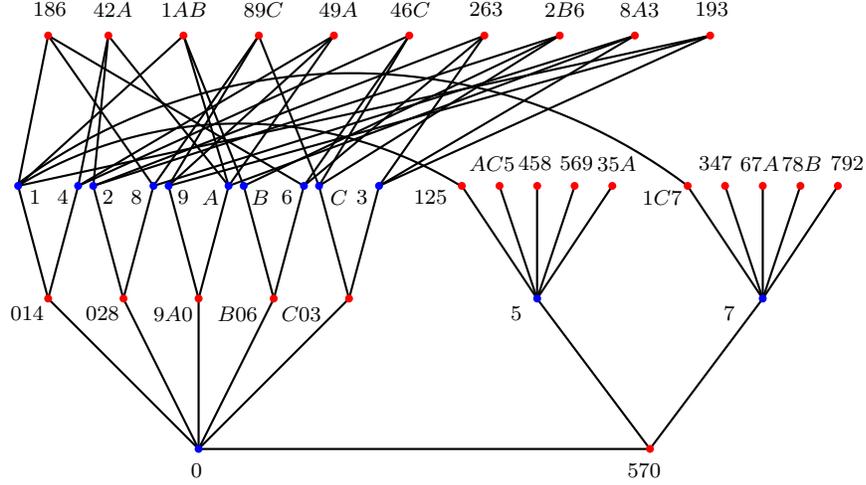
\begin{figure}[htbp]
\begin{center}
\psset{xunit=1.0cm,yunit=1.0cm,algebraic=true,dimen=middle,dotstyle=o,dotsize=3pt 0,linewidth=0.8pt,arrowsize=3pt 2,arrowinset=0.25}
\begin{pspicture*}(-1.2,0.61)(11.95,7.84)
\rput[tl](-0.25,4.95){\footnotesize{$1$}}
\rput[tl](.12,4.95){\footnotesize{$4$}}
\rput[tl](0.72,4.95){\footnotesize{$2$}}
\rput[tl](1.1,4.95){\footnotesize{$8$}}
\rput[tl](1.72,4.95){\footnotesize{$9$}}
\rput[tl](2.05,4.95){\footnotesize{$A$}}
\rput[tl](2.7,4.95){\footnotesize{$B$}}
\rput[tl](3.1,4.95){\footnotesize{$6$}}
\rput[tl](3.75,4.95){\footnotesize{$C$}}
\rput[tl](4.1,4.95){\footnotesize{$3$}}

\rput[tl](-0.2,7.45){\footnotesize{$186$}}
\rput[tl](0.6,7.45){\footnotesize{$42A$}}
\rput[tl](1.5,7.45){\footnotesize{$1AB$}}
\rput[tl](2.6,7.45){\footnotesize{$89C$}}
\rput[tl](3.6,7.45){\footnotesize{$49A$}}
\rput[tl](4.55,7.45){\footnotesize{$46C$}}
\rput[tl](5.6,7.45){\footnotesize{$263$}}
\rput[tl](6.6,7.45){\footnotesize{$2B6$}}
\rput[tl](7.6,7.45){\footnotesize{$8A3$}}
\rput[tl](8.6,7.45){\footnotesize{$193$}}

\rput[tl](-0.5,3.4){\footnotesize{$014$}}
\rput[tl](0.5,3.4){\footnotesize{$028$}}
\rput[tl](1.4,3.4){\footnotesize{$9A0$}}
\rput[tl](2.25,3.4){\footnotesize{$B06$}}
\rput[tl](3.1,3.4){\footnotesize{$C03$}}

\rput[tl](1.9,1.3){\footnotesize{$0$}}
\rput[tl](7.7,1.3){\footnotesize{$570$}}
\rput[tl](6.15,3.4){\footnotesize{$5$}}
\rput[tl](8.98,3.4){\footnotesize{$7$}}

\rput[tl](7.9,4.95){\footnotesize{$1C7$}}
\rput[tl](8.65,5.4){\footnotesize{$347$}}
\rput[tl](9.2,5.4){\footnotesize{$67A$}}
\rput[tl](9.75,5.4){\footnotesize{$78B$}}
\rput[tl](10.4,5.4){\footnotesize{$792$}}

\rput[tl](4.86,4.95){\footnotesize{$125$}}
\rput[tl](5.6,5.4){\footnotesize{$AC5$}}
\rput[tl](7.3,5.4){\footnotesize{$35A$}}
\rput[tl](6.25,5.4){\footnotesize{$458$}}
\rput[tl](6.8,5.4){\footnotesize{$569$}}

\psline(2,1.5)(8,1.5)
\psline(6.5,3.5)(8,1.5)
\psline(8,1.5)(9.5,3.5)
\psline(2,1.5)(0,3.5)
\psline(2,1.5)(3,3.5)
\psline(6.5,3.5)(6,5)
\psline(6.5,3.5)(7,5)
\psline(6.5,3.5)(5.5,5)
\psline(2,1.5)(4,3.5)
\psline(2,1.5)(2,3.5)
\psline(2,1.5)(1,3.5)
\psline(6.5,3.5)(6.5,5)
\psline(6.5,3.5)(7.5,5)
\psline(9.5,3.5)(9.5,5)
\psline(9.5,3.5)(9,5)
\psline(9.5,3.5)(8.5,5)
\psline(9.5,3.5)(10,5)
\psline(9.5,3.5)(10.5,5)
\psline(4,3.5)(4.4,5)
\psline(4,3.5)(3.6,5)
\psline(3,3.5)(3.4,5)
\psline(3,3.5)(2.6,5)
\psline(2,3.5)(2.4,5)
\psline(2,3.5)(1.6,5)
\psline(1,3.5)(1.4,5)
\psline(0.6,5)(1,3.5)
\psline(0,3.5)(0.4,5)
\psline(0,3.5)(-0.4,5)

\psline(-0.4,5)(1.8,7)
\psline(-0.4,5)(0,7)
\psline(0.4,5)(3.8,7)
\psline(0.4,5)(0.8,7)
\psline(0.4,5)(4.8,7)
\psline(0.6,5)(6.8,7)
\psline(0.6,5)(5.8,7)
\psline(0.6,5)(0.8,7)
\psline(1.4,5)(0,7)
\psline(1.4,5)(2.8,7)
\psline(1.4,5)(7.8,7)
\psline(1.6,5)(2.8,7)
\psline(1.6,5)(3.8,7)
\psline(2.4,5)(1.8,7)
\psline(2.4,5)(0.8,7)
\psline(2.4,5)(7.8,7)
\psline(2.6,5)(6.8,7)
\psline(2.4,5)(3.8,7)
\psline(2.6,5)(1.8,7)
\psline(3.4,5)(0,7)
\psline(3.4,5)(4.8,7)
\psline(3.4,5)(5.8,7)
\psline(3.6,5)(2.8,7)
\psline(3.6,5)(6.8,7)
\psline(3.6,5)(4.8,7)
\psline(4.4,5)(7.8,7)
\psline(4.4,5)(5.8,7)

\parametricplot{1.0193196127018804}{2.1204355467391705}{1*5.64*cos(t)+0*5.64*sin(t)+2.54|0*5.64*cos(t)+1*5.64*sin(t)+0.19}
\psline(-0.4,5)(8.8,7)
\psline(8.8,7)(4.4,5)
\psline(8.8,7)(1.6,5)
\parametricplot{0.923831965944893}{2.2128864854712793}{1*7.42*cos(t)+0*7.42*sin(t)+4.03|0*7.42*cos(t)+1*7.42*sin(t)+-0.92}

\begin{scriptsize}
\psdots[dotstyle=*,linecolor=blue](2,1.5)
\psdots[dotstyle=*,linecolor=red](8,1.5)
\psdots[dotstyle=*,linecolor=blue](6.5,3.5)
\psdots[dotstyle=*,linecolor=blue](9.5,3.5)
\psdots[dotstyle=*,linecolor=red](0,3.5)
\psdots[dotstyle=*,linecolor=red](3,3.5)
\psdots[dotstyle=*,linecolor=red](6,5)
\psdots[dotstyle=*,linecolor=red](7,5)
\psdots[dotstyle=*,linecolor=red](5.5,5)
\psdots[dotstyle=*,linecolor=red](4,3.5)
\psdots[dotstyle=*,linecolor=red](2,3.5)
\psdots[dotstyle=*,linecolor=red](1,3.5)
\psdots[dotstyle=*,linecolor=red](6.5,5)
\psdots[dotstyle=*,linecolor=red](7.5,5)
\psdots[dotstyle=*,linecolor=red](9.5,5)
\psdots[dotstyle=*,linecolor=red](9,5)
\psdots[dotstyle=*,linecolor=red](8.5,5)
\psdots[dotstyle=*,linecolor=red](10,5)
\psdots[dotstyle=*,linecolor=red](10.5,5)
\psdots[dotstyle=*,linecolor=blue](4.4,5)
\psdots[dotstyle=*,linecolor=blue](3.6,5)
\psdots[dotstyle=*,linecolor=blue](3.4,5)
\psdots[dotstyle=*,linecolor=blue](2.6,5)
\psdots[dotstyle=*,linecolor=blue](2.4,5)
\psdots[dotstyle=*,linecolor=blue](1.6,5)
\psdots[dotstyle=*,linecolor=blue](1.4,5)
\psdots[dotstyle=*,linecolor=blue](0.6,5)
\psdots[dotstyle=*,linecolor=blue](0.4,5)
\psdots[dotstyle=*,linecolor=blue](-0.4,5)
\psdots[dotstyle=*,linecolor=red](7.8,7)
\psdots[dotstyle=*,linecolor=red](1.8,7)
\psdots[dotstyle=*,linecolor=red](3.8,7)
\psdots[dotstyle=*,linecolor=red](4.8,7)
\psdots[dotstyle=*,linecolor=red](0,7)
\psdots[dotstyle=*,linecolor=red](0.8,7)
\psdots[dotstyle=*,linecolor=red](6.8,7)
\psdots[dotstyle=*,linecolor=red](5.8,7)
\psdots[dotstyle=*,linecolor=red](2.8,7)
\psdots[dotstyle=*,linecolor=red](8.8,7)

\end{scriptsize}
\end{pspicture*}
\caption{Part of the (6,3;6)-cage.}
\label{fig:m5}
\end{center}
\end{figure}

On one hand, 
$$n_0(3,5;6)=\left\lceil \frac{2(5)^2+7(5)+3}{3}\right\rceil = 30 \mbox{ and } \left\lceil \frac{5}{3} + \frac{1}{3} \right\rceil =2 \leq e, $$
hence $$32 \leq |V(G)|.$$

On the other hand, it follows from Figure~\ref{fig:m5} that 
$$|V(G)|=|V(G')|-2-5=\frac{2(6)^2+7(6)+3}{3}-7=\frac{2(36)+42+3}{3}-7=\frac{117}{3}-7$$
$$|V(G)|=39-7=32,$$
and $G$ is necessarily a cage.

%
%

\section {Bipartite biregular  cages of girths 8, 12 and 16}

We close our paper with two results concerning biregular bipartite $(n,m;2r)$-graphs for $r\in\{4,6,12\}$ using the identities given in Theorem \ref{PL}. The first result follows immediately from Lemma~\ref{VMLemma}, Theorem \ref{PL} and the existence results mentioned in the paragraph following these.
It deals with biregular bipartite Moore cages of girths $8,12$ and $16$.
\begin{theorem}\label{bb-cages}
There exist infinite families of biregular bipartite Moore cages with parameters
$(n+1,n^2+1;8)$, $(n+1,n^3+1;12)$ and $(n+1,n^2+1;16)$.
\end{theorem}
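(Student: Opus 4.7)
The plan is to derive all three existence claims as direct consequences of Lemma~\ref{VMLemma}, Theorem~\ref{PL}, and the known infinite families of classical generalized polygons recorded in the paragraph following Theorem~\ref{PL}, and then to match the resulting incidence-graph orders against the adjusted Moore bound for even $r$ by straightforward polynomial expansion.

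For girth~$8$, I fix any prime power $n$ and let $\Gamma$ be a generalized quadrangle of order $(s,t)=(n,n^2)$; such an infinite family is asserted by the existence paragraph. Lemma~\ref{VMLemma} yields that the incidence graph $G$ of $\Gamma$ is connected and of girth $2\cdot 4 = 8$, while the fact that every line meets $s+1 = n+1$ points and every point lies on $t+1 = n^2+1$ lines makes $G$ bipartite biregular with degree set $\{n+1,\,n^2+1\}$. By Theorem~\ref{PL},
\[
|V(G)| \;=\; (1+n)(1+n^3) + (1+n^2)(1+n^3) \;=\; (1+n^3)(n^2+n+2).
\]
What remains is the routine verification that the even-$r$ Moore bound at $r=4$, evaluated with ``$m$''$=n^2+1$ and the first argument equal to $n+1$, expands to the same polynomial in~$n$.

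The girth~$12$ and girth~$16$ cases are handled in exactly the same way. For girth~$12$, I use a generalized hexagon of order $(n,n^3)$ (produced over every prime power~$n$ by the twisted triality construction), whose incidence graph is bipartite biregular of girth~$12$ with degrees $n+1$ and $n^3+1$ and total order $(1+n^4+n^8)(n^3+n+2)$. For girth~$16$, I use a Ree--Tits generalized octagon of order $(n^2,n)$, which exists whenever $n$ is an odd power of~$2$; the incidence graph is bipartite biregular of girth~$16$ with degree set $\{n+1,\,n^2+1\}$ and total order $(1+n^3)(1+n^6)(n^2+n+2)$. In each case the required comparison is against the even-$r$ Moore bound at $r=6$ and $r=8$, respectively.

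The only genuine task is the polynomial verification in the three cases: expand the two partite sums of the adjusted Moore bound with the correct degree substitution, collect monomials in~$n$, and compare with the closed form of $|V(G)|$ coming from Theorem~\ref{PL}. The mild obstacle worth flagging is the alternating exponent pattern in the Moore bound: the two partite sums are not symmetric in the two degrees, and miscounting the number of terms at $r/2\in\{2,3,4\}$ is the easiest way to miss the intended equality. Once each expansion is written out in full, however, equality reduces to a direct monomial-by-monomial check with no further combinatorial input.
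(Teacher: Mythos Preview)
Your proposal is correct and follows exactly the approach the paper indicates: the paper states that Theorem~\ref{bb-cages} ``follows immediately from Lemma~\ref{VMLemma}, Theorem~\ref{PL} and the existence results mentioned in the paragraph following these,'' and you have simply spelled out that immediate deduction (choosing the appropriate orders $(n,n^2)$, $(n,n^3)$, $(n^2,n)$ and matching the resulting vertex counts against the adjusted Moore bound). The polynomial identities you flag as the remaining routine work do indeed hold in all three cases.
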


Finally, we address the existence of biregular bipartite $(n,m;2r)$-graphs in relation 
to the Moore  bound $B(n,m;2r)$ (\ref{Moore:bound-even}) and (\ref{Moore:bound-odd}) proved in 
\cite{FilRamRivJaj19}. The idea behind the proof included in \cite{FilRamRivJaj19}
is to consider a tree denoted by ${\cal{T}}_{uv}$ whose order matches the Moore 
bound $B(n,m;2r)$ and is called the {\em{Moore tree}}. 
It is necessarily contained in any biregular bipartite $(n,m;2r)$-graph
and consists of trees ${\cal{T}}_u$ and ${\cal{T}}_v$ attached to the vertices $u$ 
and $v$ of an edge $f=uv$. Each of the two trees consists of levels of vertices of varying degrees $n$ and $m$ and depth $r-1$ (one starting of a root of degree $n-1$ and one starting of a root of degree $m-1$, and the last level consisting of vertices of degree $1$). 
Thus, ${\cal{T}}_{uv}$ is the union of an edge $f=uv$ and two disjoints trees ${\cal{T}}_u$ and ${\cal{T}}_v$, rooted in $f=uv$, where $V({\cal{T}}_{uv})=V({\cal{T}}_u)\cup V({\cal{T}}_v)$ and $E({\cal{T}}_{uv})=E({\cal{T}}_u)\cup E({\cal{T}}_v)\cup f$. A biregular bipartite $(n,m;2r)$-Moore cage exists if and only if
the tree ${\cal{T}}_{uv}$ can be completed into an $(n,m;2r)$-graph by adding 
$n-1$ edges to each leaf of ${\cal{T}}_u$ connecting these leaves 
to the leaves of the tree ${\cal{T}}_v$ in such a way that that each leave of 
${\cal{T}}_v$ ends up being of degree $m$ and no cycles shorter than $2r$ are introduced. 

Using techniques introduced in the study of the $(k;g)$-cage problem (see for example \cite{AGMS07,ABGMV08,ABH10}), we obtain the following extension of 
Theorem~\ref{bb-cages} to other classes of parameters $(n,m;2r)$. 
\begin{theorem}
There exist infinite families of biregular bipartite $(n,m;2r)$-graphs 
with parameters $m=n^2$ for $r= \{4,8\}$, and  $m=n^3$ for $r=6$, whose orders
are
\[ n^{\frac{r}{2}}m^{\frac{r}{2}-1}+m^{\frac{r}{2}}n^{\frac{r}{2}-1}. \]
\end{theorem}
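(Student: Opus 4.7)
The plan is to construct, in each of the three cases, a bipartite biregular graph $G$ of the required parameters by pruning the incidence graph of the generalized polygon $\Gamma$ supplied by Theorem~\ref{bb-cages}, in the spirit of the passage from a projective plane to an affine plane used in Corollary~\ref{cor:affine planes}. Let $\Gamma$ be the generalized $r$-gon of order $(n,m)$, where $m=n^{r/2}$ for $r\in\{4,6\}$ and $m=n^2$ for $r=8$. Classical examples of these polygons, together with the requisite \emph{spreads}, that is, sets of lines partitioning the point set, are known to exist for infinitely many prime powers $n$.

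Fix such a spread $S$ of $\Gamma$, a point $p$, and the unique line $\ell\in S$ through $p$, and perform a BFS from $p$ in the incidence graph $I(\Gamma)$. Let $V_0=\{p\},V_1,\dots,V_r$ denote the resulting distance shells, with odd-indexed shells consisting of lines and even-indexed ones of points. Because $I(\Gamma)$ has girth $2r$, this decomposition is a genuine tree up to depth $r-1$, and a routine count using the degrees $n+1$ and $m+1$ of $\Gamma$ gives $|V_{r-1}|=(m+1)(nm)^{r/2-1}$. Applying the vertex-count identities of Theorem~\ref{PL} to $\Gamma$ and subtracting yields
\[
|V_r|=|\mathcal{P}|-\sum_{k=0}^{r/2-1}|V_{2k}|=n^{r/2}m^{r/2-1}.
\]

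The graph $G$ is now obtained in two steps. First, delete every vertex at distance at most $r-2$ from $p$, leaving the induced subgraph $G_0$ on $V_{r-1}\cup V_r$. A girth argument (two distance-$(r-2)$ parents of a common line would close a cycle of length at most $2r-2$) shows that each line of $V_{r-1}$ has lost exactly one of its $n+1$ points, so its degree in $G_0$ is $n$; meanwhile each point of $V_r$ still has degree $m+1$, since every line through such a point lies in $V_{r-1}$ or in the nonexistent $V_{r+1}$. Second, delete the lines of $V_{r-1}\cap S$ from $G_0$. For each $q\in V_r$ the unique spread line $\ell_q\in S$ through $q$ is distinct from $\ell$ and satisfies $d(\ell_q,p)=r-1$, so $\ell_q\in V_{r-1}\cap S$; a second girth argument shows that $\ell_q$ contains exactly one distance-$(r-2)$ point and $n$ distance-$r$ points. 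Hence the assignment $q\mapsto\ell_q$ is $n$-to-$1$ from $V_r$ onto $V_{r-1}\cap S$, giving $|V_{r-1}\cap S|=n^{r/2-1}m^{r/2-1}$ and a partition of $V_r$ into $n$-blocks. Removing these lines lowers the degree of every point of $V_r$ by exactly one, from $m+1$ down to $m$, while every surviving line still has degree $n$.

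The graph $G$ is therefore bipartite biregular of degrees $(n,m)$ with girth at least $2r$ (being a subgraph of $I(\Gamma)$), and its order is
\[
|V_r|+\bigl(|V_{r-1}|-|V_{r-1}\cap S|\bigr)=n^{r/2}m^{r/2-1}+n^{r/2-1}m^{r/2},
\]
which matches the claimed formula. The principal obstacle will be guaranteeing the existence of a spread through $p$ in each of the three classical families (quadrangles of order $(n,n^2)$, hexagons of order $(n,n^3)$, octagons of order $(n,n^2)$); this is handled by the standard finite-geometric constructions, which provide such spreads for infinitely many prime powers $n$. Once the spread is pinned down, the rest of the argument is the routine girth-based bookkeeping just sketched.
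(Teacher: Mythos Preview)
Your argument rests on the existence of a line spread in each of the three families of generalized polygons, and this is where there is a genuine gap. For the classical quadrangles of order $(q,q^2)$ spreads are indeed available, but for the twisted triality hexagons of order $(q,q^3)$ and the Ree--Tits octagons of order $(q,q^2)$ the existence of spreads is by no means a ``standard finite-geometric construction''; for the Ree--Tits octagons in particular no spreads are known and the question is, as far as I am aware, open. Without a spread your second deletion step has nothing to act on and the points of $V_r$ remain of degree $m+1$, so the construction fails for $r\in\{6,8\}$.

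The paper sidesteps this entirely with a much simpler device: instead of rooting the breadth-first search at a single vertex $p$, root it at an \emph{edge} $f=uv$. Since the $(n+1,m+1;2r)$-graph supplied by Theorem~\ref{bb-cages} is a Moore cage, it is exactly the Moore tree $\mathcal{T}_{uv}$ together with the extra edges joining the leaves of $\mathcal{T}_u$ to the leaves of $\mathcal{T}_v$. Deleting every vertex and edge of $\mathcal{T}_{uv}$ except the two leaf levels and the edges between them, each surviving vertex loses precisely one incidence (to its tree parent), so the degrees drop from $n+1$ to $n$ on one side and from $m+1$ to $m$ on the other simultaneously, with no auxiliary geometric structure required. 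The sizes $n^{r/2}m^{r/2-1}$ and $m^{r/2}n^{r/2-1}$ of the two leaf sets follow at once from the alternating branching factors.

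Rephrased in your own set-up: rather than removing from $V_{r-1}$ the lines of a spread, remove from $V_{r-1}$ the lines at distance $r-2$ from a fixed line $v\in V_1$ through $p$. Every point $q\in V_r$ is then at distance $r-1$ from $v$ and therefore lies on exactly one such line (its unique neighbour closer to $v$, which is forced to lie in $V_{r-1}$ since $q\in V_r$). This set of lines does precisely the job you wanted the spread to do, and it exists automatically in any generalized $r$-gon.
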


\begin{proof}
Let $G$ be a biregular bipartite Moore $(n+1,m+1;2g)$-cage whose existence is
guaranteed by Theorem~\ref{bb-cages}. As argued above, the Moore graph $G$ contains the Moore tree ${\cal{T}}_{uv}$ with the additional edges connecting the
leaves of the two subtrees ${\cal{T}}_u$ and ${\cal{T}}_v$. Deleting the edges and vertices of ${\cal{T}}_{uv}$ other than the leaves and the edges connecting them 
yields a bipartite biregular graph with the desired parameters. 
\end{proof}

We do not know whether the graphs constructed in the proof of the above theorem are cages or not.


\end{document}